\newtheorem*{thm*}{Theorem}
\newtheorem{thm}{Theorem}[section]
\newtheorem{cor}[thm]{Corollary}
\theoremstyle{remark}
\newtheorem{rem}[thm]{Remark}
\newtheorem*{rem*}{Remark}
\numberwithin{equation}{section}
\crefname{section}{\textsection}{\textsection\textsection}
\DeclareMathOperator{\meas}{meas}
\DeclarePairedDelimiter\parentheses{\lparen}{\rparen}
\DeclarePairedDelimiter\braces{\lbrace}{\rbrace}
\DeclarePairedDelimiter\brackets{\lbrack}{\rbrack}
\DeclarePairedDelimiter\abs{\lvert}{\rvert}
\DeclarePairedDelimiter\norm{\lVert}{\rVert}
\DeclarePairedDelimiter\floor{\lfloor}{\rfloor}
\NewDocumentCommand\set{ s o m o }{%
	\IfBooleanTF{#1}{\IfNoValueTF{#4}{\braces*{#3}}{\braces*{\,#3:#4\,}}}{%
		\IfNoValueTF{#2}{\IfNoValueTF{#4}{\braces{#3}}{\braces{\,#3:#4\,}}}{%
			\IfNoValueTF{#4}{\braces[#2]{#3}}{\braces[#2]{\,#3:#4\,}}}}%
}
\NewDocumentCommand\e{ s O{} m }{%
	\IfBooleanTF{#1}{%
		\operatorname{e}_{#2}\parentheses*{#3}%
	}{\operatorname{e}_{#2}\parentheses{#3}}%
}
\title[Additivity of resonance method]{An additive application of the resonance method}
\subjclass[2020]{
	Primary
	11L03; Secondary 11P99, 11N99}
\keywords{Divisor problem; Circle problem; Resonance method; Kronecker's theorem}
\author{Athanasios Sourmelidis}
\address{%
	Athanasios~Sourmelidis\\%
		Univ. Lille, CNRS\\%
	UMR 8524 - Laboratoire Paul Painlevé\\%
	F-59000~Lille\\%
	France%
}
\email{athanasios.sourmelidis@univ-lille.fr}
\begin{document}
	\begin{abstract}
		We improve upon an Omega result due to Soundararajan with respect to general trigonometric polynomials having positive Fourier coefficients.
		Instead of Dirichlet's  approximation theorem we employ the resonance method and
		this leads to better extreme results in lattice point problems such as Dirichlet's divisor problem and Gauss' circle problem.
		Moreover, the present approach shows that the resonance method can also be viewed as an additive device, which has been mainly used in multiplicative problems so far.
		Its extension to trigonometric polynomials with complex coefficients is also discussed and its connection to Bohr and Jessen's proof of Kronecker's theorem is highlighted.
	\end{abstract}
	\maketitle
	\section{Introduction and main results}
	For two sequences of real numbers $0\leq\lambda_1<\lambda_2<\dots$ and $f(n)\geq0$, $n\geq1$,
	with $\sum_{n\geq1}f(n)<\infty$ we consider the trigonometric series
	\begin{align}\label{eq:trigonometric_polynomial}
	F(x):=\sum_{n\geq1}f(n)\e{\lambda_n x},\quad\mathrm{e}(x):=e^{2\pi ix},\quad x\in\mathbb{R}.
	\end{align}
	In 2003 Soundararajan \cite[Lemma 2.1]{Soundararajan_2003} showed that, for any integers $L,M,N\geq2$,  any set $\mathcal{M}\subseteq\mathbb{N}$ with cardinality $M$ such that $2|\lambda_m-\lambda_N|\leq{\lambda_N}$, $m\in\mathcal{M}$, any $\beta\in\mathbb{R}$ and any $X\geq2$,
	\begin{align}\label{eq:motivation}
			\max_{x\in\brackets*{X/2,(6L)^{M+1}X}}\abs*{\Re\parentheses*{e^{i\beta}F(x)}}\geq\frac{1}{8}\sum_{\substack{m\in\mathcal{M}}}f(m)-\frac{4F(0)}{\pi^2X\lambda_N}-\frac{1}{L-1}\sum_{\substack{\lambda_n\leq2\lambda_N}}f(n).
	\end{align}
	He then employed the above inequality, Vorono\"i's summation formula and certain information on the anatomy of integers to prove extreme results for Dirichlet's divisor problem and Gauss' circle problem.
	That is, if
	\[
	\Delta(x):=\sum_{n\leq x}d(n)-x\log x-(2\gamma-1)x\quad\text{and}\quad P(x):=\sum_{n\leq x}r(n)-\pi x,\quad x\geq1,
	\]
	where $d(n)$ denotes the number of divisors of $n$ and $r(n)$ the number of ways of writing $n$ as the sum of two integer squares, then \cite[Theorem 1.1]{Soundararajan_2003} 
	\begin{align}\label{eq:circle_and_divisor}
		\begin{split}
		\Delta(x)&=\Omega\parentheses*{x^{1/4}(\log x)^{1/4}(\log\log x)^{(3/4)(2^{4/3}-1)}(\log\log\log x)^{-5/8}},\\
		P(x)&=\Omega\parentheses*{x^{1/4}(\log x)^{1/4}(\log\log x)^{(3/4)(2^{1/3}-1)}(\log\log\log x)^{-5/8}}.
		\end{split}
	\end{align}
	
	Before we go any further, let us recall some notation.
	For a real-valued function $f$ and a positive function $g$, the {\it Omega} symbol $f=\Omega(g)$ means that $\limsup_{x\to\infty}|f(x)|/g(x)>0$, which is the negation of $f=o(g)$.
	We write $f=\Omega_+(g)$ if $\limsup_{x\to\infty}f(x)/g(x)>0$, and $f=\Omega_-(g)$ if $\liminf_{x\to\infty}f(x)/g(x)<0$.
	The Landau symbol $f=O(g)$ and the Vinogradov symbol $f\ll g$ have their usual meaning with the implicit constant being absolute unless indicated otherwise. 
	Lastly, if $f$ is positive, then the notation $f\asymp g$ stands for $g\ll f\ll g$.
	
	The $\Omega$-results on $\Delta(x)$ and $P(x)$ are the best known in a line of research initiated by Hardy \cite{Hardy1917}  and including most notably the work of Corr\'adi and K\'atai \cite{Corradi1967} and Hafner \cite{Hafner1981}.
	In particular, Hafner obtains $\Omega_+$- and $\Omega_-$-results for $\Delta(x)$ and $P(x)$, respectively, with slightly smaller exponents on the $\log\log x$ and observes that this is the limit of his method.
	On the other hand, Soundararajan's approach has its limit rooted in the inequality \eqref{eq:motivation} and specifically in the last term on the right-hand side. 
	For applications one is led to consider the parameter $L$ as an increasing function of $X$ in order for this term to be negligible.
	At the same time we aim to maximize $M$ so that the first term in the right-hand side of \eqref{eq:motivation} is as large as possible while keeping the interval $[X/2,(6L)^{M+1}X]$ of polynomial length.
	As a consequence the parameter $M$ can be at most $o(\log X)$ and relaxing this restriction has been the motivation for this work.
	\begin{thm}\label{thm:main_threorem}
{Let $F(x)$ be a trigonometric polynomial defined as in \eqref{eq:trigonometric_polynomial} and having, in particular, positive Fourier coefficients. 
Then} there is an explicitly computed absolute constant $C>0$ such that for any integers $M,N\geq1$, any set $\mathcal{M}\subseteq\mathbb{N}$ with cardinality $M$ for which $2|\lambda_m-\lambda_N|\leq{\lambda_N}$, $m\in\mathcal{M}$, any $\beta\in\mathbb{R}$ and any $T> Y\geq1$, we have that
		\begin{align*}
			\max_{x\in\brackets*{Y/2,2T}}\abs*{\Re\parentheses*{ e^{{i\beta}}F(x)}}\geq C \sum_{\substack{m\in\mathcal{M}}}f(m)-\frac{4F(0)}{\pi^2Y\lambda_N}+O\parentheses*{\frac{2^MY\log T}{T}\sum_{\lambda_n\leq2\lambda_{N}}f(n)}.
		\end{align*}
		\end{thm}
	Comparing the above inequality with \eqref{eq:motivation}, we find that they differ essentially in the last terms of their respective right-hand sides.
	The above error term yields the improvement on the magnitude of $M$ since the choice of $Y\ll T^{1-\epsilon}$ and $M\ll\epsilon\log T$ is admissible and the factor $2^MYT^{-1}\log T$ acts like ${(L-1)}^{-1}$.
	We can therefore take $M\asymp\log T$ while keeping the interval, where the extremal behavior of $F(x)$ is detected, of polynomial length.
		
	In contrast to the aforementioned results of Hardy, Hafner and Soundararajan who had employed Dirichlet's simultaneous  approximation theorem, 
	 Theorem \ref{thm:main_threorem} will follow by an application of the {\it resonance method}.
	Incidentally, this technique has been developed by Soundararajan \cite{Soundararajan2008} a few years after \cite{Soundararajan_2003} to address questions related to the extreme values of the Riemann zeta-function $\zeta(s)$, $s:=\sigma+it$, on the critical line\footnote{Similar results and the development of the resonance method inside the strip $1/2 < \sigma < 1$ have been carried out earlier -yet passed unknown for decades- by Voronin \cite{Voronin1988}.}
	 and of families of Dirichlet $L$-functions at $\sigma=1/2$.
	The idea is, roughly speaking, to prove $\Omega$-results for a Dirichlet polynomial (with multiplicative coefficients) $F(t)=\sum_{n\leq N}f(n)n^{it}$, by introducing another Dirichlet polynomial $R(t)$ that will {\it resonate} with $F(t)$ in the following sense:
\begin{align*}
\max_{t\in[Y,T]}|F(t)|\int_Y^T|R(t)|^2\mathrm{d}t\geq\abs*{\int_Y^TF(t)|R(t)|^2\mathrm{d}t}\geq A\int_Y^T|R(t)|^2\mathrm{d}t,\quad A>0.
\end{align*}
Then it is a matter of properly constructing $R(t)$ to maximize $A$ while satisfying the above inequalities.
Of course several error terms are involved in the background but when successfully estimated, the method yields also lower bounds for the measure of $t\in[Y,T]$ at which $|F(t)|$ is large.
	
We will first extend the resonance method to the setting of general trigonometric series $F(x)$ having positive Fourier coefficients. 
This assumption not only simplifies the exposition but it also produces superior results compared to when $f(n)\in\mathbb{C}$.
Now the resonator $R(x)$ will not be a Dirichlet polynomial but a trigonometric polynomial whose frequencies are linear combinations of the ``building blocks'' $\lambda_{m}$, $m\in\mathcal{M}$.
The proof of Theorem \ref{thm:main_threorem} distinctly illustrates how $R(x)$ works to bring forth the terms  $f({m})$.
In the special case when $\beta=0$ $(\bmod\,2\pi)$ we can also refine the theorem and obtain lower bounds for the measure of those $x\in[Y,T]$ at which $\Re(F(x))$ is large.
\begin{thm}\label{thm:main_subthreorem}
	{Let $F(x)$ be a trigonometric polynomial defined as in \eqref{eq:trigonometric_polynomial} and having, in particular, positive Fourier coefficients. 
		Then} for any integer $M\geq1$, any set $\mathcal{M}\subseteq\mathbb{N}$ with cardinality $M$ and any $T> Y\geq1$, we have that
	\begin{align*}
		\max_{x\in\brackets*{Y,T}}\Re\parentheses*{F(x)}\geq 2C \sum_{m\in\mathcal{M}}f(m)+O\parentheses*{\frac{2^MY\log T}{T}\sum_{n\geq1}f(n)}=:\Sigma,
		\end{align*}
		where $C>0$ is the constant from Theorem \ref{thm:main_threorem}.
If $\Sigma>0$, then for any $0<V\leq\Sigma$
		\[
		\mathrm{meas}\set*{x\in\brackets*{Y,T}:\Re\parentheses*{F(x)}\geq V}\geq\frac{(\Sigma-V) T}{2^{M}
				\log T\max_{x\in\brackets*{Y,{T}}}|F(x)|}.
		\]
\end{thm}
In the absence of any information regarding the sequence $f(n)$, $n\geq1$, (other than its positivity) we remark that the lower bound $\sum_{m\in\mathcal{M}}f(m)$ with $\#\mathcal{M}\asymp\log T$ is the limit of the method up to constants\footnote{For the sake of simplicity it was not pursued to get the optimal constants.
	The resonator constructed in the sequel is modeled from \cite{Aistleitner_2015} and it consist of terms with ``square-free'' frequencies (we will give a precise meaning of this).
	More involved resonators can produce {larger} constants.
	For instance a resonator modeled from \cite{Aistleitner2017a} would give larger explicitly computed constant $C>0$ in Theorem \ref{thm:main_threorem} and Theorem \ref{thm:main_subthreorem}.}.
On the one hand, we are free to choose any terms from $F(x)$ with the natural choice being the first terms of the descending sequence $\max_{n\geq1}f(n)=f(m_0)\geq f(m_1)\geq f(m_2)\geq\dots$, the order of which is often not easy to determine.
The distribution of the frequencies $\lambda_n$ on the real line has no effect on this selection, which is the advantage of working with positive coefficients.
On the other hand,  we can only consider at most $O(\log T)$ of these terms because of the factor $2^M$, which represents in this case the approximate number of terms that the resonator suffices to consist of in order to ``capture'' $M$ terms from $F(x)$.
Subject to these restrictions, we can obtain by implementing Theorem \ref{thm:main_threorem} to the proof of \cite[Theorem 1.1]{Soundararajan_2003} the following corollary.
	\begin{cor}\label{cor:improved_circle_and_divisor}
	We have that
	\begin{align*}
		\Delta(x)&=\Omega\parentheses*{x^{1/4}(\log x)^{1/4}(\log\log x)^{(3/4)(2^{4/3}-1)}(\log\log\log x)^{-3/8}},\\
	P(x)&=\Omega\parentheses*{x^{1/4}(\log x)^{1/4}(\log\log x)^{(3/4)(2^{1/3}-1)}(\log\log\log x)^{-3/8}}.
\end{align*}
\end{cor}
\begin{rem}
The improvement in comparison to \eqref{eq:circle_and_divisor} might at first sight seem minimal and it is also not possible to specify whether $\Omega$ is $\Omega_+$ or $\Omega_-$ because Theorem \ref{thm:main_threorem} produces a positive lower bound for the absolute value of ${\Re(e^{i\beta}F(x))}$.
On the other hand, it has been conjectured in \cite{Soundararajan_2003} based on a probabilistic argument, that the $\Omega$-result \eqref{eq:circle_and_divisor} for $\Delta(x)$ represents its true maximal order up to $(\log\log x)^{o(1)}$.
Further evidence towards the support of the  more precise conjecture 
\begin{align*}
	\begin{split}
		\max_{x\in[X,2X]}\Delta(x)&\asymp X^{1/4}(\log X)^{1/4}(\log\log X)^{(3/4)(2^{4/3}-1)}\\
		\max_{x\in[X,2X]}P(x)&\asymp X^{1/4}(\log X)^{1/4}(\log\log X)^{(3/4)(2^{1/3}-1)}.
	\end{split}
\end{align*}
were given recently by Lamzouri \cite[Conjecture 1.1, Conjecture 1.7]{Lamzouri2025} who employed a probabilistic model for $\Delta(x)$ and $P(x)$ to study the distribution of their large values.
In light of the above, increasing the power of the triple iteration of the logarithm (conjecturally up to $0$)  may be the only possible improvement one might hope to achieve for the lower bounds of these error terms.
\end{rem}
 Corollary \ref{cor:improved_circle_and_divisor} addresses only two of the many lattice point problems where Dirichlet's approximation theorem has been employed to produce $\Omega$-results. 
In Section \ref{applications} we will briefly see how to obtain it, since the proof of \cite[Theorem 1.1]{Soundararajan_2003} remains practically the same.
We provide several other examples where Theorem \ref{thm:main_threorem} and Theorem \ref{thm:main_subthreorem} can be used instead to obtain improved lower bounds but the list is definitely not complete.

In the special case when $\lambda_n=\log n$ and $f(n)$ is a multiplicative function, Theorem \ref{thm:main_threorem} and Theorem \ref{thm:main_subthreorem} do not represent the known $\Omega$-results for $F(x)$, because this is not the right polynomial to consider.
The theorems should instead be applied to a Dirichlet polynomial approximation of
$\log F(x)$ whose frequencies are given by $\lambda_{p^k}=k\log p$ for prime $p$ and integer $k\geq1$.
Here the following phenomenon occurs. 
If a resonator produces an extreme value $A>1$ when  applied to $|F(x)|$, then the same resonator  produces the extreme value $\log A$ when applied to $\log |F(x)|$ and vice versa.
In both cases the resonator is constructed in such way to interact with the same terms, namely  $f({p^k})\mathrm{e}(\lambda_{p^k}x)$. 
However, due to the multiplicativity of $f(n)$ and the additivity of $\lambda_n$, the remaining terms of $F(x)$ can also give a positive contribution to the lower bound which is then mutliplicatively reconstructed from the contribution coming from $f({p^k})$.
The resonance method should notably be applied to $\log|F(x)|$ with greater care because we have to take into account the zero-distribution of $F(x)$.
If we have sufficient knowledge about this distribution then the two approaches are in sync.
For instance, the resonator employed in the sequel reduces to the one constructed by Aistleitner in \cite{Aistleitner_2015} for proving lower bounds of $\zeta(s)$ to the right of the critical line. 
Applying the same resonator to $\log|\zeta(s)|$, in the sense of Theorem \ref{thm:main_threorem}, we could obtain the corresponding lower bound.
With additional information about $f(p^k)$, it is also possible to construct resonators with a richer structure than the one used in the present work, leading to better lower bounds.
Such constructions have been first realized by Soundararajan \cite{Soundararajan2008} and are culminating in the work of Bondarenko and Seip \cite{Bondarenko2017} and de la Bretèche and Tenenbaum \cite{Breteche2018} regarding lower bounds of $\zeta(s)$ on the critical line.

It is also possible to generalize the resonance method to trigonometric polynomials having complex Fourier coefficients.
In this case, however, the linear dependencies of the frequencies have also an effect on the length of the interval where $\Re(F(x))$ reaches a certain magnitude.
This is a natural by-product of the method and a common feature with the quantitative version of Kronecker's approximation theorem.
It turns out that the resonance method is one of the many techniques that have been devised to prove Kronecker's theorem and a result for general trigonometric polynomials in the form of Theorem \ref{thm:main_threorem} has been already obtained by Chen \cite{Chen2000}.
In the multiplicative setting we remark that this is the reason why we are able to obtain $\Omega$-results of similar strength when applying Soundararajan's or Chen's resonator to an $L$-function $L(s)$ or its logarithm $\log L(s)$, respectively, whose Dirichlet coefficients satisfy certain assumptions of number-theoretic interest.
In \cite{Aistleitner2017}  Aistleitner and Pa{\'n}kowski employ both approaches/resonators and make a detailed discussion about their strong and weak points.
They note that Chen's theorem produces inferior results only on the {\it critical line} of $L(s)$.
This is because Soundararajan's resonator is constructed from many more and much larger primes and it is tailor-made to capture this transition phase of the extremal behavior of $L(s)$ from the right of its critical line to the critical line.
If we were to apply this resonator to $\log |L(s)|$ or $\Im(\log L(s))$ then we could recover the expected lower bound as we discussed above.
Indeed this can be easily confirmed under the Riemann hypothesis for $L(s)$, namely that $L(s)$ has no zeros on the right of its critical line.
In point of fact, this is how Bondarenko and Seip \cite[Theorem 2]{Bondarenko2018} proved conditional lower bounds for the argument of $\zeta(s)$ on the critical line.

We will return to the discussion about Chen's theorem in the last section, where we will also prove an alternative version of it that matches Theorem \ref{thm:main_threorem}.
\begin{thm}\label{thm:Chen's_theorem}
{Let $F(x):=\sum_{n\leq N}f(n)\mathrm{e}(\lambda_n x)$ be a trigonometric polynomial with complex Fourier coefficients and $k\geq2$ be  fixed positive integer.}
	For positive integers $L, M$  with $M\leq N$ and a set $\mathcal{M}\subseteq\lbrace1,\dots,N\rbrace$ of cardinality $M$ let also $\delta=\delta(L,N,\mathcal{M})$ be defined as
	\[
	\min_{n\leq N}\set*{\abs*{j\lambda_n+\sum_{m\in\mathcal{M}}\ell_m\lambda_m}:j\in\set{0,1},\,\ell_m\in[-L,L]\cap\mathbb{Z},\,(j,(\ell_m)_{m\in\mathcal{M}})\neq(0,\dots,0)}.
	\]
	If $\delta>0$, then for any $T\geq1$
	\[
	\max_{x\in[T,2T]}\Re\parentheses*{F(x)}\geq\frac{L}{L+1}\sum_{m\in\mathcal{M}}|f(m)|-\frac{c(k)}{(\delta T)^k}\sum_{n\leq N}|f(n)|,
	\]
	where $c(k)>0$.
\end{thm}
\section{Proofs of the main results}\label{Proofs}
Before giving the proofs of Theorem \ref{thm:main_threorem} and Theorem \ref{thm:main_subthreorem} we introduce two auxiliary functions. 
For any $u\in\mathbb{R}$ we denote by
\[
K(u):=\parentheses*{\frac{\sin(\pi u)}{\pi u}}^2\quad\text{ and }\quad\Phi(u):=e^{-\pi u^2}
\]
the Fejer kernel and the Gaussian function, respectively.
Their Fourier transforms satisfy
\[
\widehat{K}(u)=\int_{\mathbb{R}}K(x)\e{-ux}\mathrm{d}x=\max(0,1-|u|)\quad\text{ and }\quad\widehat{\Phi}(u)=\Phi(u)>0.
\] 
\begin{proof}[Proof of Theorem \ref{thm:main_threorem}]
	Starting as in the proof of \cite[Lemma 2.1]{Soundararajan_2003} with the convolution formula
	\begin{align*}
		\begin{split}
			\int_{\mathbb{R}}{\lambda_N}K\parentheses*{{\lambda_Nu}}\Re\parentheses*{e^{i\beta}F(x+u)}\e{-\lambda_N u}\mathrm{d}u
			&=\frac{e^{i\beta}}{2}\sum_{n\geq1}f(n)\e{\lambda_nx}\widehat{K}\parentheses*{\frac{\lambda_N-\lambda_n}{\lambda_N}}\\
			&=:\frac{e^{i\beta}}{2}F_1(x),
		\end{split}
	\end{align*}
	which is valid for any $x\in\mathbb{R}$,  we can show that
	\begin{align}\label{eq:convolution_inequality}
		\begin{split}
		\max_{|u|\leq Y/2}\abs*{\Re\parentheses*{e^{i\beta} F(x+u)}}
		&\geq\abs*{\int_{|u|\leq Y/2}{\lambda_N}K\parentheses*{{\lambda_Nu}}\Re\parentheses*{e^{i\beta}F(x+u)}\e{-\lambda_N u}\mathrm{d}u}\\
		&=\abs*{\frac{e^{i\beta}}{2}F_1(x)-\int_{|u|\geq Y/2}{\lambda_N}K\parentheses*{{\lambda_Nu}}\Re\parentheses*{e^{i\beta}F(x+u)}\e{-\lambda_N u}\mathrm{d}u}\\
		&\geq\frac{1}{2}{\Re\parentheses*{{F_{1}(x)}}}-\frac{4F(0)}{\pi^2Y\lambda_N}.
		\end{split}
	\end{align}
	Therefore, it suffices to obtain $\Omega$-results for $\Re\parentheses*{{F_{1}(x)}}$.
	To that end we define the sets
	\[
	\mathcal{A}:=\set*{\sum_{m\in\mathcal{M}}\varepsilon_m\lambda_m:\varepsilon_m=0\text{ or }1},\quad \mathcal{B}_j:=\mathcal{A}\cap\left[\frac{(j-1)\log T}{T},\frac{j\log T}{T}\right),\quad j\geq1,
	\]
	and
	\[
	\bigsqcup_{\substack{\mathcal{B}_{j}\neq\emptyset}}\set{\min \mathcal{B}_{j}}=\bigsqcup_{\substack{q\leq Q}}\set{\min \mathcal{B}_{j_q}}=:\set*{d_q:q\leq Q}.
	\]
	Notice that $Q\leq\#\mathcal{A}= 2^M$ and $(j_q)_{q\leq Q}$ is a strictly increasing sequence of positive integers.
	We then set
	\[
	R(x):=\sum_{q\leq Q}\e{d_qx},\quad x\in\mathbb{R},
	\]
	and our goal is to compute the ratio of the quantities
	\[
	J_1=\int_\mathbb{R}F_1(x)|R(x)|^2 \Phi\parentheses*{\frac{x\log T}{T}}\mathrm{d}x\quad\text{ and }\quad J_2:=\int_{\mathbb{R}}|R(x)|^2\Phi\parentheses*{\frac{x\log T}{T}}\mathrm{d}x.
	\]
	
	Starting with $J_2$,  squaring out and interchanging summations and integration yield that
	\[
	J_2=\frac{QT\widehat{\Phi}(0)}{\log T}+\frac{T}{\log T}\mathop{\sum\sum}_{q\neq r\leq Q}\widehat{\Phi}\parentheses*{\frac{T(d_q-d_r)}{\log T}}.
	\]
	Since $\widehat{\Phi}(x)=\Phi(x)>0$ and $\frac{T(d_q-d_r)}{\log T}>j_q-j_r-1\geq0$ for $q> r$, we deduce that
	\begin{align}\label{eq:resonator_bounds}
		\frac{QT}{\log T}\leq J_2
		\leq \frac{QT}{\log T}+\frac{T}{\log T}\mathop{\sum\sum}_{q\neq r\leq {Q}}\Phi(|j_q-j_r|-1)\leq\frac{CQT}{\log T},
			\end{align}
	where the last relation, with $C>0$ some absolute constant, follows from the exponential decay of $\Phi(x)$.
	
	We proceed on the estimation of $J_1$.
	{Since $F_1(x)$ and $R(x)$ are absolutely convergent}, we can square out and interchange summations and integration to obtain that
	\begin{align*}
		J_1
		&=\frac{T}{\log T}\sum_{q,r\leq Q}\sum_{n\geq1}f(n)\widehat{K}\parentheses*{\frac{\lambda_N-\lambda_n}{\lambda_N}}\widehat{\Phi}\parentheses*{\frac{T(\lambda_{n}+d_q-d_r)}{\log T}}.
	\end{align*}
	Observe that all terms of the above expression are positive. 
	Therefore $J_1\in\mathbb{R}_+$ and
	\begin{align}\label{eq:bounds_resonated_polynomial}
		J_1\geq \frac{T}{\log T}\sum_{m\in\mathcal{M}}f(m)\widehat{K}\parentheses*{\frac{\lambda_N-\lambda_m}{\lambda_N}}\sum_{q,r\leq Q}\Phi\parentheses*{\frac{ T|\lambda_m+d_q-d_r|}{\log T}}.
	\end{align}
	
	Let now $m\in\mathcal{M}$ be fixed and set \[
	\mathcal{Q}_m:=\set*{q\leq{Q}:d_q+\lambda_m\in\mathcal{A}}.
	\]
	Then either $\#\mathcal{Q}_m\geq Q/2$ or $\#\mathcal{Q}_m<Q/2$.
	If the first is true, then we argue as follows.
	All of the numbers $d_q+\lambda_m$, $q\in\mathcal{Q}_m$, are elements of $\mathcal{A}$ and so we can find $r_q\leq Q$ with $d_q+\lambda_m\in \mathcal{B}_{j_{r_q}}$.
	Hence,  $T|\lambda_m+d_q-d_{r_q}|\leq\log T$ by construction and, consequently,
	\[
	\sum_{q\leq Q}\sum_{r\leq Q}\Phi\parentheses*{\frac{T|\lambda_m+d_q-d_r|}{\log T}}\geq \sum_{q\in\mathcal{Q}_m}\Phi\parentheses*{\frac{T|\lambda_m+d_q-d_{r_q}|}{\log T}}\geq \sum_{q\in\mathcal{Q}_m}\Phi(1)\geq \frac{\Phi(1)Q}{2}.
	\]
	The second case $\#\mathcal{Q}_m<Q/2$ is treated similarly by considering the elements $d_r-\lambda_m\in\mathcal{A}$, $r\notin\mathcal{Q}_m$.
	Thus, for any $m\in\mathcal{M}$, the above double sum is greater than $\Phi(1){Q}/{2}$ and in view of \eqref{eq:resonator_bounds} and \eqref{eq:bounds_resonated_polynomial}
	we deduce that
	\begin{align}\label{eq:ratio}
		\frac{J_1}{J_2}\geq\frac{\Phi(1)}{2C}\sum_{m\in\mathcal{M}}f(m)\widehat{K}\parentheses*{\frac{\lambda_N-\lambda_m}{\lambda_N}}\geq\frac{\Phi(1)}{4C}\sum_{\substack{m\in\mathcal{M}}}f(m).
	\end{align}
	{The last inequality followed from $\widehat{K}\parentheses*{\frac{\lambda_N-\lambda_m}{\lambda_N}}\geq1-\frac{|\lambda_N-\lambda_m|}{\lambda_N}\geq\frac{1}{2}$, valid by assumption for any $m\in\mathcal{M}$.}
	It remains to {relate} $J_1/J_2$ with the maximum of $\Re\left(F_1(x)\right)$ in a certain interval of $x$. 
	The trivial estimates $|F_1(x)|\leq F_1(0)$ and $|R(x)|\leq R(0)=Q\leq 2^M$  imply that
	\begin{align*}
		\braces*{\int_{|x|\geq T}+\int_{|x|\leq Y}}F_1(x)|R(x)|^2\Phi\parentheses*{\frac{x\log T}{T}}\mathrm{d}x\ll{F_1(0)Q^2Y}\ll\frac{QT}{\log T}\frac{F_1(0)2^MY\log T}{T}.
	\end{align*}
	Thus, we obtain that
	\begin{align*}
		\int_{|x|\in[Y, T]}F_1(x)|R(x)|^2\Phi\parentheses*{\frac{x\log T}{T}}\mathrm{d}x
		=J_1+O\parentheses*{\frac{QT}{\log T}\frac{F_1(0)2^MY\log T}{T}}.
	\end{align*}
	Observe that the real part of the integrand above is an even function.
	Therefore, in view of relation \eqref{eq:resonator_bounds} we have that
	\begin{align*}
		\begin{split}
			\max_{x\in[Y, T]}\Re\parentheses*{F_1(x)}
			&\geq \frac{1}{J_2}{	\Re\brackets*{\int_{|x|\in[Y, T]}F_1(x)|R(x)|^2\Phi\parentheses*{\frac{x\log T}{T}}\mathrm{d}x}}\\
			&=\frac{J_1}{J_2}+O\parentheses*{\frac{2^MY\log T}{T}\sum_{\lambda_n\leq2\lambda_{N}}f(n)}.
		\end{split}
	\end{align*}
	The theorem follows now from the above and relations \eqref{eq:ratio} and \eqref{eq:convolution_inequality}.
\end{proof}
\begin{proof}[Proof of Theorem \ref{thm:main_subthreorem}]
	Repeating the proof of Theorem \ref{thm:main_threorem} without employing the convolution formula and treating $F(x)$ instead of $F_1(x)$ leads to the following inequality 
	\begin{align}\label{eq:proof_without_rotation}
		\int_\mathbb{R}\Re\parentheses*{F(x)}|R(x)|^2 \Phi\parentheses*{\frac{x\log T}{T}}\mathrm{d}x\geq\frac{\Phi(1)}{4C} \sum_{m\in\mathcal{M}}f(m)\int_\mathbb{R}|R(x)|^2 \Phi\parentheses*{\frac{x\log T}{T}}\mathrm{d}x,
	\end{align}
	whence
	\[
	\max_{x\in[Y,T]}\Re\parentheses*{{F}(x)}\geq\frac{1}{J_2}\int_{|x|\in[Y,T]}\Re\parentheses*{{F}(x)}|R(x)|^2\Phi\parentheses*{\frac{x\log T}{T}}\mathrm{d}x\geq\Sigma.
	\]
	If we set now
	\[
	{\mathcal{S}}:=\set*{|x|\in[Y,T]:{\Re\parentheses*{{F}(x)}}\geq V}
	\]
	then
	\[
	\Sigma\leq\frac{1}{J_2}\int_{{\mathcal{S}}}\Re\parentheses*{{F}(x)}|R(x)|^2\Phi\parentheses*{\frac{x\log T}{T}}\mathrm{d}x+V
	\]
	and since $|R(x)|^2\leq Q^2\leq 4^M$ and $J_2\geq\frac{QT}{\log T}$, we deduce that
	\[
	\meas({\mathcal{S}})\geq\frac{(\Sigma-V) T}{2^M
		\log T\max_{x\in[Y,T]}|F(x)|}.
	\]
\end{proof}
	\section{Applications}\label{applications}
	Here and in the sequel we adopt the notation $\log_{j+1}:=\log\log_j$, $j\geq1$, with $\log_1:=\log$.
	Moreover, for the discussion in this section $c>0$ will denote a sufficiently small constant.
	
	We begin with the error term $\Delta(x)$ on Dirichlet's divisor problem.
	By an elementary argument Dirichlet was able to show that $\Delta(x)\ll\sqrt{x}$.
	The first improvement on the order of magnitude of $\Delta(x)\ll_\epsilon x^{\theta+\epsilon}$ using methods from analytic number theory has been realized by Vorono\"i, who first developed and then employed a variant of the following formula (see \cite[(12.4.4)]{Titchmarsh1986} or \cite[(3.1)]{Soundararajan_2003}) 
	\begin{align*}
		\frac{\pi\sqrt{2}\Delta\parentheses*{x^2}}{x^{1/2}}+o(1)=\Re\parentheses*{e^{-\pi i/4}\sum_{n\leq X^3}\frac{d(n)}{n^{3/4}}\mathrm{e}\parentheses*{2\sqrt{n}x}}=:\Re\parentheses*{e^{i\beta}F(x)},\quad x\in\brackets*{\sqrt{X},X^{3/2}}.
	\end{align*}
	More advanced techniques have been utilized over the past century with the best result to date being due to Huxley \cite{Huxley2003} who showed that $\theta=\frac{131}{416}=0.31490..$ is admissible and it is conjectured that it can be reduced to $1/4$.
	In the opposite direction, the above formula has been also employed to produce $\Omega$-results.
	\begin{proof}[Sketch proof of Corollary \ref{cor:improved_circle_and_divisor}]
	Soundararajan \cite[Theorem 1.1]{Soundararajan_2003} applied inequality \eqref{eq:motivation} to the right-hand side of Vorono\"i's formula with
	\[
	\mathcal{M}:=\set*{m\in[N/4,9N/4]:\omega(m)=\floor{\lambda\log_2N}},
	\] where $\omega(n)$ denotes the number of prime divisors of $n\in\mathbb{N}$ and $\lambda$ is optimally chosen in the end.
By a classic result (see for example \cite[Chapter II.6, Theorem 6.4]{Tenenbaum2015}) he derived that
	\begin{align}\label{eq:Sathe_Selberg}
	M=\#\mathcal{M}\asymp\frac{N(\log N)^{\lambda-1-\lambda\log\lambda}}{\sqrt{\log_2N}}
	\end{align}
	and, in view of $d(m)\geq 2^{\omega(m)}$, he concluded that
	\[
\max_{x\in[X/2,(6L)^{M+1}X]}\abs*{\Re\parentheses*{e^{i\beta}F(x)}}\gg\frac{N^{1/4}(\log N)^{\lambda-1-\lambda\log\lambda+\lambda\log2}}{\sqrt{\log_2 N}}+O\parentheses*{\frac{\log X}{X^{1/4}}+\frac{N^{1/4}\log N}{L}}.
	\]
	For the first term on the right-hand side above to dominate, we require that 
	\[
2-\lambda+\lambda\log\lambda-\lambda\log2\geq0\quad\text{ and	}\quad L\gg(\log N)^{2-\lambda+\lambda\log\lambda-\lambda\log2}\sqrt{\log_2N}.
	\]
At the same time we want to be able to return to $\Delta\parentheses*{x^2}$ and so $M\log L\leq c\log X$ should also hold in order for $[X/2,(6L)^{M+1}X]\subseteq[\sqrt{X},X^{3/2}]$.
Optimizing this argument yields that $N=c\log X(\log_2X)^{1-\lambda+\lambda\log\lambda}(\log_3 X)^{-1/2}$ and $\lambda=2^{4/3}$.

If we instead apply Theorem \ref{thm:main_threorem} with the same set of integers $\mathcal{M}$, $Y={X}$ and $T=2^MX^{5/4}$, we then get that
	\begin{align*}
\max_{x\in\brackets*{{X},2^{M+1}X^{5/4}}}\abs*{\Re\parentheses*{e^{i\beta}F(x)}}
&\gg\frac{N^{1/4}(\log N)^{\lambda-1-\lambda\log\lambda+\lambda\log2}}{\sqrt{\log_2 N}}\\
&\quad+O\parentheses*{\frac{\log X}{X^{1/4}}+\frac{(M+\log X)N^{1/4}\log N}{X^{1/4}}}.
\end{align*}
Now we only require that $M\leq c\log X$ which in view of relation \eqref{eq:Sathe_Selberg} yields that $N=c\log X(\log_2X)^{1-\lambda+\lambda\log\lambda}(\log_3 X)^{+1/2}$ and $\lambda=2^{4/3}$ are the optimal choices.
This proves the first part of Corollary \ref{cor:improved_circle_and_divisor}, while the second part follows with minimal modifications.
\end{proof}
The above reasoning and improvements apply also to the error term for the $k$-divisor problem (see \cite[Theorem 1.2]{Soundararajan_2003}) or its generalization to number fields (see \cite[Theorem 1]{Girstmair_2005}) and the error term of the mean square of $\zeta(s)$ (see \cite[Theorem 1.1]{Lau_2005}) to mention a few.
	\begin{cor}\label{cor:divisork} 
		Let $k\geq2$ be fixed and
		\[
		\Delta_k(x):=\sum_{n\leq x}d_k(n)-\mathrm{Res}\brackets*{x^{s-1}\zeta^k(s)s^{-1}}_{s=1},\quad x\geq1,
		\]
		be the error term in the $k$-divisor problem.
		Then 
		\[
		\Delta_k(x)=\Omega_*\parentheses*{(x\log x)^{\frac{k-1}{2k}}(\log_2x)^{(k^{2k/(k+1)}-1)\frac{k+1}{2k}}(\log_3 x)^{-1/2+\frac{k-1}{4k}}},
		\]
		where $\Omega_*=\Omega_+$ or $\Omega_-$ if $k\equiv3$ or $7$ $(\bmod\,8)$, respectively, and $\Omega_*=\Omega$ otherwise.
	\end{cor}
	\begin{proof}[Sketch proof of Corollary \ref{cor:divisork}]
Following the construction in the proof of \cite[Theorem 1.2]{Soundararajan_2003}, it suffices to show that
\begin{align}\label{eq:convolution_divisork}
\max_{x\in[X/2,X^{3/2}]}\abs*{\Re\parentheses*{e^{(k-3)\pi i/4}F(x)}}\gg(\log X)^{\frac{k-1}{2k}}(\log_2X)^{(k^{2k/(k+1)}-1)\frac{k+1}{2k}}(\log_3 X)^{-1/2+\frac{k-1}{4k}},
\end{align}
where
\[
F(x):=\sum_{n\geq1}\frac{d_k(n)}{n^{(k+1)/(2k)}}\exp\parentheses*{-\pi^2\parentheses*{\frac{n}{N}}^{2/k}}\mathrm{e}\parentheses*{kn^{1/k}x}.
\]
We let  $\mathcal{M}$ be the set of positive integers in $[2^{-k}N,(3/2)^kN]$ that have exactly $\floor{\lambda\log_2N}$ distinct prime factors.
We have already mentioned before that its cardinality is $M\asymp N(\log N)^{\lambda-1-\lambda\log\lambda}(\log_2N)^{-1/2}$.
 We employ now Theorem \ref{thm:main_threorem} with $Y=X$, $T=2^MX^{5/4}$ and this  set $\mathcal{M}$ to deduce, in view of $d_k(m)\geq k^{\omega(m)}$, that
 \begin{align*}
 \max_{x\in[X/2,2^{M+1}X^{5/4}]}\abs*{\Re\parentheses*{e^{(k-3)\pi i/4}F(x)}}
 &\gg  \frac{N^{(k-1)/(2k)}(\log N)^{\lambda-1-\lambda\log\lambda+\lambda\log k}}{\sqrt{\log_2N}}\\
 &\quad+O_\epsilon\parentheses*{\frac{M+\log X}{X^{1/4}}N^{(k-1)/(2k)+\epsilon}}
 \end{align*}
We wish to maximize the first term on the right-hand side of the above relation with the restriction $M\leq c\log X$ in order for $[X/2,2^{M+1}X^{5/4}]\subseteq[X/2,X^{3/2}]$.
This
implies that $N=c\log X(\log_2 X)^{1+\lambda\log \lambda-\lambda}(\log_3 X)^{+1/2}$ and $\lambda=k^{2k/(k+1)}$ are the optimal choices, in which case we obtain \eqref{eq:convolution_divisork}.

When $k\equiv3$ or $7$ $(\bmod\,8)$, we repeat the above procedure where we employ Theorem \ref{thm:main_subthreorem} instead of Theorem \ref{thm:main_threorem} to obtain a positive lower bound for $\max_{x\in[X/2,X^{3/2}]}\Re\parentheses*{F(x)}$.
This produces the desired $\Omega_*$-results.
Moreover, by utilizing upper bounds for the relevant error terms in these two cases (see \cite[Chapter XII]{Titchmarsh1986}), we could also derive lower bounds for the measure of those $x\in[X,X^{3/2}]$ for which $x^{\frac{1-k}{2k}}\Delta_k(x)$ (or $-x^{\frac{1-k}{2k}}\Delta_k(x)$, depending on $k$) is as large as $V\ll(\log X)^{\frac{k-1}{2k}+o(1)}$.
\end{proof}

	\begin{cor}
		Let 
		\[
		E(t):=\int_{0}^t\abs*{\zeta\parentheses*{\frac{1}{2}+iu}}^2\mathrm{d}u-t\log\frac{t}{2\pi}-(2\gamma-1)t,\quad t\geq1,
		\]
		be the error term for the mean square of $\zeta(s)$. 
		Then 
		\[
		E(t)=\Omega\parentheses*{(t\log t)^{1/4}(\log_2 t)^{(3/4)(2^{4/3}-1)}(\log_3 t)^{-3/8}}.
		\]
	\end{cor}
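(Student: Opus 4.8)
The plan is to rerun the argument of Corollary~\ref{divisor} essentially verbatim, replacing the truncated Vorono\"i formula for $\Delta$ by the truncated form of Atkinson's formula employed in \cite{Lau_2005}. After the substitution $t=2\pi x^{2}$, that formula provides, uniformly for $x\in\brackets*{\sqrt{X},X^{3/2}}$, an identity of the shape
\begin{align*}
	\frac{E\parentheses*{2\pi x^{2}}}{\sqrt{2}\,x^{1/2}}+o(1)
	&=\Re\parentheses*{e^{-\pi i/4}\sum_{n\leq X^{3}}\frac{(-1)^{n}d(n)}{n^{3/4}}\,\e{2\sqrt{n}\,x}}\\
	&=:\Re\parentheses*{e^{i\beta}F(x)}.
\end{align*}
This is the exact counterpart, for $E$, of the Vorono\"i identity for $\Delta$ recalled above: Atkinson's second sum and all the error terms are absorbed into the $o(1)$, and the only structural difference is the sign $(-1)^{n}$ carried by the Fourier coefficients.

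Since Theorem~\ref{keyresult} requires positive coefficients, I would absorb this sign by restricting to even indices, taking
\[
	\mathcal{M}:=\set*{m\in[N/4,9N/4]:\ 2\mid m,\ \omega(m)=\floor{\lambda\log_2 N}},
\]
so that $(-1)^{m}=1$ and $f(m)=d(m)m^{-3/4}\geq 2^{\omega(m)}m^{-3/4}>0$ for $m\in\mathcal{M}$, while the hypothesis $2\abs*{\lambda_{m}-\lambda_{N}}\leq\lambda_{N}$ of Theorem~\ref{keyresult} holds because here $\lambda_{n}=2\sqrt{n}$ and $m\in[N/4,9N/4]$. One then needs only to check that imposing $2\mid m$ changes the cardinality \eqref{card} by at most a constant factor --- this follows from the same estimate of \cite[Chapter II.6, Theorem 6.4]{Tenenbaum2015} applied to odd integers with one fewer prime factor (or, for the lower bound, from the injection $m'\mapsto 2m'$) --- so that still $M=\#\mathcal{M}\asymp N(\log N)^{\lambda-1-\lambda\log\lambda}(\log_2 N)^{-1/2}$. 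Applying Theorem~\ref{keyresult} with these data and $Y=\sqrt{X}$ then gives, exactly as in Corollary~\ref{divisor},
\begin{align*}
	\max_{x\in\brackets*{\sqrt{X},\,2^{M+1}X}}\abs*{\Re\parentheses*{e^{i\beta}F(x)}}
	&\gg\frac{N^{1/4}(\log N)^{\lambda-1-\lambda\log\lambda+\lambda\log 2}}{\sqrt{\log_2 N}}\\
	&\quad+O\parentheses*{\frac{\log X}{X^{1/4}}+\frac{(M+\log X)N^{1/4}\log N}{X^{1/2}}}.
\end{align*}

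From here the optimisation is identical to the one following Corollary~\ref{divisor}: only $M\leq c\log X$ is now needed for $\brackets*{\sqrt{X},2^{M+1}X}\subseteq\brackets*{\sqrt{X},X^{3/2}}$, so the admissible choices are $N=c\log X(\log_2 X)^{1-\lambda+\lambda\log\lambda}(\log_3 X)^{1/2}$ and $\lambda=2^{4/3}$. Feeding these in, unfolding $\Re\parentheses*{e^{i\beta}F(x)}$ back to $E\parentheses*{2\pi x^{2}}x^{-1/2}$ at a point $x_{0}$ realising the maximum, and setting $t:=2\pi x_{0}^{2}$ (so that $x_{0}^{1/2}\asymp t^{1/4}$ and $\log X\asymp\log t$), the exponent of $\log_2 t$ reduces to $\tfrac{3}{4}(\lambda-1)$ at $\lambda=2^{4/3}$ while that of $\log_3 t$ becomes $\tfrac{1}{8}-\tfrac{1}{2}=-\tfrac{3}{8}$, yielding
\[
	E(t)=\Omega\parentheses*{(t\log t)^{1/4}(\log_2 t)^{(3/4)(2^{4/3}-1)}(\log_3 t)^{-3/8}}.
\]
As in Corollary~\ref{divisor}, Theorem~\ref{keyresult} only bounds $\abs*{\Re\parentheses*{e^{i\beta}F(x)}}$ and here $\beta=-\pi/4\not\equiv0\pmod{2\pi}$, so Theorem~\ref{lowerbounds} does not apply and the conclusion is an $\Omega$-result with no sign attached, as stated.

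I do not anticipate a genuine obstacle beyond bookkeeping. The only ingredient absent from the treatment of $\Delta$ is the sign $(-1)^{n}$, handled by the restriction to even $m$ and the routine verification that this does not spoil \eqref{card}; the one delicate point --- uniform validity of the truncated Atkinson formula with error $o(x^{1/2})$ throughout the polynomial-length window $\brackets*{\sqrt{X},X^{3/2}}$, on which the whole reduction rests --- is precisely what is established in \cite{Lau_2005}. Alternatively one could keep all $n$ and invoke the complex-coefficient variant of the resonance method alluded to in the introduction, but the even-index choice is simpler and loses nothing of substance.
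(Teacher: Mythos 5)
Your overall plan --- rerun the argument of Corollary \ref{divisor} with the expansion for $E(t)$ used in \cite{Lau_2005}, keep $Y=\sqrt X$, require only $M\leq c\log X$, and pick $N=c\log X(\log_2X)^{1-\lambda+\lambda\log\lambda}(\log_3X)^{1/2}$, $\lambda=2^{4/3}$ --- is exactly what the paper intends (its proof of this corollary is precisely the deferral to \cite[Theorem 1.1]{Lau_2005} with Theorem \ref{keyresult} substituted for inequality \eqref{motivation}), and your exponent bookkeeping giving $(3/4)(2^{4/3}-1)$ and $-3/8$ is correct. The gap is in your treatment of the sign $(-1)^n$, which is the one genuinely new difficulty of this application. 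Theorem \ref{keyresult} is stated, from the first lines of the paper, for series whose \emph{entire} coefficient sequence satisfies $f(n)\geq0$; restricting the resonating set $\mathcal{M}$ to even indices only makes the \emph{selected} coefficients positive and does not put the series $\sum_n(-1)^nd(n)n^{-3/4}\e{2\sqrt n\,x}$ within the hypotheses of the theorem. The positivity of all coefficients is not cosmetic: the proof hinges on the step ``all terms of $J_1$ are positive, hence $J_1\geq$ the contribution of the selected terms''. With alternating coefficients, every odd $n$ whose frequency $2\sqrt n$ happens to lie within $\asymp(\log T)/T$ of some resonator difference $d_k-d_\ell$ (a $\{0,\pm1\}$-combination of the $2\sqrt m$, $m\in\mathcal M$) contributes \emph{negatively} to $J_1$, and nothing in your argument prevents these contributions from cancelling the main term. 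Ruling them out would require a Diophantine separation statement for sums of $\asymp\log X$ square roots at scale $(\log T)/T\asymp 2^{-M}X^{-1}$; the trivial norm/conjugate bound for such algebraic numbers is doubly exponentially weaker than this, and needing such input is exactly the situation of the complex-coefficient/Chen setting of \cref{KronChen}, not of Theorem \ref{keyresult}. So as written the key step ``Applying Theorem \ref{keyresult} with these data'' is not justified.

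The repair is to follow the actual reduction of \cite{Lau_2005}, which is organised around the sign before any positive-coefficient inequality is invoked --- for instance via the decomposition $(-1)^nd(n)=4d(n/2)\indic_{2\mid n}-2d(n/4)\indic_{4\mid n}-d(n)$ (equivalently, the classical relation between $E(t)$ and $\Delta^*(x)=-\Delta(x)+2\Delta(2x)-\tfrac12\Delta(4x)$), together with a careful choice of $\mathcal{M}$ adapted to that structure --- and only then substitute Theorem \ref{keyresult} for the Dirichlet-approximation inequality, which is where the improvement from $(\log_3t)^{-5/8}$ to $(\log_3t)^{-3/8}$ enters. A secondary point: the ``truncated Atkinson formula'' you state, with phase exactly $\e{2\sqrt n\,x}$, coefficients $(-1)^nd(n)n^{-3/4}$, length $n\leq X^3$ and error $o(1)$ uniformly on $[\sqrt X,X^{3/2}]$, is not a direct quote (Atkinson's phase is $2t\operatorname{arsinh}\sqrt{\pi n/(2t)}+\cdots$, there is a second sum, and the truncation is tied to $t$); deferring this to \cite{Lau_2005} is fine, but it should be flagged as part of the reduction carried out there rather than as an identity of the same standing as the Vorono\"i formula for $\Delta$.
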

The proof of the corollary follows  {\it mutatis mutandis} that of \cite[Theorem 1.1]{Lau_2005} with the only difference being in Lemma 2.2 therein. 
In our case the admissible choices are $\tau\asymp (\log X)^{1/2}(\log_2 X)^{(1+\lambda\log \lambda-\lambda)/2}(\log_3 X)^{1/4}$ and $\lambda=2^{4/3}$  which yield the desired improvement.
\section{The link to Kronecker's theorem}
We conclude with a discussion on the resonance method for trigonometric polynomials
\[
F(x)=\sum_{n\leq N}f(n)\mathrm{e}(\lambda_n x),\quad x\in\mathbb{R},
\]
having complex coefficients $f(n)$.
The origins of it can be traced back to Bohr and Jessen's proof \cite{Bohr1932} of Kronecker's approximation theorem.
\begin{thm*}[Kronecker's theorem]\label{Similt}
	Let $\lambda_1,\dots,\lambda_N$ be real numbers  linearly independent over $\mathbb{Q}$.
	For any real numbers $\alpha_1,\dots,\alpha_N$ and $\varepsilon>0$, there is $x_0\in\mathbb{R}$, such that\footnote{Here $\norm{x}$ denotes the distance of $x$ from the nearest integer.}
	\[
	\max_{n\leq N}\norm{x_0\lambda_n-\alpha_n}<\varepsilon.
	\]
\end{thm*}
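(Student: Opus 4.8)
The plan is to deduce Kronecker's theorem from a lower bound for $\sup_{x\in\RR}\Re F(x)$ applied to the trigonometric polynomial with \emph{complex} coefficients
\[
F(x):=\sum_{n\le N}\e{\lambda_n x-\alpha_n}=\sum_{n\le N}\e{-\alpha_n}\,\e{\lambda_n x},\qquad x\in\RR .
\]
First I would record the elementary reduction: since $\Re\e{\lambda_n x-\alpha_n}\le 1$ for every $n$, any $x_0$ with $\Re F(x_0)>N-\delta$ satisfies $\sum_{n\le N}\bigl(1-\cos\bigl(2\pi(\lambda_n x_0-\alpha_n)\bigr)\bigr)<\delta$, hence $1-\cos\bigl(2\pi(\lambda_n x_0-\alpha_n)\bigr)<\delta$ for each $n$; as $1-\cos(2\pi\theta)\ge 8\norm{\theta}^2$, this yields $\norm{\lambda_n x_0-\alpha_n}<\sqrt{\delta/8}$ for all $n\le N$. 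Choosing $\delta:=8\varepsilon^2$ then proves the theorem, so it suffices to show that $\sup_{x\in\RR}\Re F(x)\ge N-\delta$ for every $\delta>0$.

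Next I would run the resonance scheme of the proof of \cref{keyresult}, but with a resonator adapted to the complex coefficients: fix a large integer $K$ and put
\[
R(x):=\prod_{n\le N}\bigl(1+\e{\lambda_n x-\alpha_n}\bigr)^{K}
=\sum_{\boldsymbol c\in\set{0,\dots,K}^N}\Bigl(\prod_{n\le N}\binom{K}{c_n}\Bigr)\e{-\langle\boldsymbol c,\boldsymbol\alpha\rangle}\,\e{\langle\boldsymbol c,\boldsymbol\lambda\rangle x}.
\]
This is exactly the complex analogue of the resonator used in the proof of \cref{keyresult}: its frequencies are the non-negative integer combinations $\langle\boldsymbol c,\boldsymbol\lambda\rangle=\sum_{n\le N}c_n\lambda_n$, while the twisting phases $\e{-\langle\boldsymbol c,\boldsymbol\alpha\rangle}$ are there to make the principal terms of $\int F\abs{R}^2$ line up in phase. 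With $T$ a large parameter I would then expand
\[
J_1:=\int_{\RR}F(x)\,\abs{R(x)}^2\,\Phi\Bigl(\tfrac{x\log T}{T}\Bigr)\mathrm{d}x,\qquad
J_2:=\int_{\RR}\abs{R(x)}^2\,\Phi\Bigl(\tfrac{x\log T}{T}\Bigr)\mathrm{d}x
\]
term by term. By the linear independence of $\lambda_1,\dots,\lambda_N$ over $\QQ$, the frequency $\lambda_m+\langle\boldsymbol c-\boldsymbol c',\boldsymbol\lambda\rangle$ vanishes only when $\boldsymbol c'=\boldsymbol c+\boldsymbol e_m$ (which forces $c_m<K$), and then the attached phase is $\e{-\alpha_m}\e{\alpha_m}=1$, so all principal terms of $J_1$ are positive; likewise the only diagonal terms of $J_2$ are those with $\boldsymbol c'=\boldsymbol c$. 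Using $\binom{K}{c}\tfrac{K-c}{c+1}=\binom{K}{c+1}$ together with the Vandermonde identities $\sum_c\binom{K}{c}^2=\binom{2K}{K}$ and $\sum_c\binom{K}{c}\binom{K}{c+1}=\binom{2K}{K+1}$, the principal parts give
\[
\frac{\Re J_1}{J_2}=N\cdot\frac{\binom{2K}{K+1}}{\binom{2K}{K}}+o(1)=\frac{NK}{K+1}+o(1)\qquad(T\to\infty).
\]

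The heart of the matter — and the step I expect to be the main obstacle — is to show that this error is negligible once $T$ is large compared with all other quantities. Every non-principal term of $J_1$ and every off-diagonal term of $J_2$ carries a factor $\Phi\bigl(\tfrac{T\nu}{\log T}\bigr)$ with $\nu$ a \emph{non-zero} real of the shape $\lambda_m+\langle\boldsymbol c-\boldsymbol c',\boldsymbol\lambda\rangle$ or $\langle\boldsymbol c-\boldsymbol c',\boldsymbol\lambda\rangle$; since the integer vectors involved have entries in $\set{0,\dots,K}$, only finitely many such $\nu$ occur, and the linear independence of the $\lambda_n$ over $\QQ$ forces $\abs{\nu}\ge\eta$ for some $\eta=\eta(K,N,\lambda_1,\dots,\lambda_N)>0$. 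The super-exponential decay of $\Phi$ then bounds the total error by $o(J_2)$ as $T\to\infty$, and the truncations to $\abs{x}\le T$ and away from $\abs{x}\le 1$ are controlled, as in the proof of \cref{keyresult}, by the trivial estimates $\abs{F(x)}\le N$ and $\abs{R(x)}\le R(0)=2^{NK}$. Hence $\sup_{\abs{x}\le T}\Re F(x)\ge \Re(J_1/J_2)+o(1)\ge \tfrac{NK}{K+1}-o(1)$; letting first $T\to\infty$ and then $K\to\infty$ yields $\sup_{x\in\RR}\Re F(x)\ge N-\delta$ for any prescribed $\delta>0$, which by the first paragraph proves Kronecker's theorem. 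One could equally work with the genuine time averages $\limsup_{X\to\infty}\tfrac1{2X}\int_{-X}^X$, which exist by almost periodicity and evaluate to the diagonal sums exactly via Weyl's criterion, removing all error terms; I would keep the integral version to make the parallel with \cref{keyresult} transparent. Note that the linear dependencies of the $\lambda_n$ intervene twice — to isolate the phase-aligned principal terms and to separate the remaining frequencies from $0$, the latter dictating how long an interval one must search — precisely the phenomenon highlighted before the statement.
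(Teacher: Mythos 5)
Your argument is correct, but it runs along a genuinely different (finite, quantitative) track than the paper's. The paper follows Bohr and Jessen: it keeps the extra constant term in $F$, multiplies by the product of Fej\'er kernels $W_L(x)=K_L(x\lambda_1-\alpha_1)\cdots K_L(x\lambda_N-\alpha_N)$, and evaluates the \emph{exact} infinite time averages $\lim_{T\to\infty}\frac{1}{2T}\int_{-T}^{T}$, where linear independence makes every cross term vanish identically; letting $L\to\infty$ gives $\max_{x\in\RR}\Re F(x)=N+1$ and hence the theorem, with no error analysis at all. You instead drop the constant term, deduce the approximation property quantitatively from $1-\cos(2\pi\theta)\ge 8\norm{\theta}^2$, and replace the Fej\'er product by the binomial-weight resonator $|R(x)|^2=\prod_{n\le N}\abs{1+\e{\lambda_n x-\alpha_n}}^{2K}$, run against the Gaussian-weighted finite integrals $J_1,J_2$ exactly as in the proof of Theorem 2.1 (\cref{keyresult}); the phase alignment at the principal frequencies, the Vandermonde evaluation $NK/(K+1)$, and the separation constant $\eta>0$ killing the finitely many off-resonant Gaussian factors are all sound, and the trivial bounds $|F|\le N$, $|R(0)|=2^{NK}$ do control the truncations (in fact, for mere existence of $x_0$ you could skip truncation entirely, since $\Re J_1\le\sup_{\RR}\Re F\cdot J_2$). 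What each approach buys: the paper's version is cleaner and error-free, since the infinite mean annihilates all non-principal terms exactly (and the paper itself remarks that replacing the Fej\'er coefficients $c(\mu)$ by other nonnegative weights, i.e.\ using some $|R_L(x)|^2$, does the same job); your version keeps $T$ finite and makes the parallel with \cref{keyresult} explicit, so it is effectively a step toward the quantitative form of Kronecker's theorem (Chen's theorem quoted in the same section), with the lower bound $\eta$ on nonvanishing frequencies playing the role of $\delta(L,N)$ there and dictating how large $T$ must be. The only points worth tightening are cosmetic: to get a strict inequality $\Re F(x_0)>N-\delta$ apply your bound with $\delta/2$, and note that the restriction away from $|x|\le 1$ is superfluous for this qualitative statement.
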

Bohr and Jessen considered the polynomial
\[
F(x)=1+\sum_{n\leq N}\mathrm{e}(-\alpha_n)\mathrm{e}(\lambda_{n}x),\quad x\in\mathbb{R},
\]
and showed that $\sup_{x\in\mathbb{R}}\Re(F(x))= N+1$, which implies the theorem.
It is clear that the supremum is less than or equal to $N+1$ but to obtain the equality they employed the positive trigonometric polynomial
\[
W_L(x):=K_L(x\lambda_1-\alpha_1)\cdots K_L(x\lambda_N-\alpha_N),
\]
where
\[
 K_L(x):=\sum_{|\ell|\leq L}\parentheses*{1-\frac{|\ell|}{L}}\mathrm{e}(\ell x)={\frac{1}{L}\parentheses*{\frac{\sin(L\pi x)}{\sin(\pi x)}}^2}.
\]
Multiplying out and using the linear independency of the frequencies it can be shown that
\[
F(x)W_L(x)=1+\frac{L-1}{L}N+S(x),
\]
 where $S(x)$ is a trigonometric polynomial whose frequencies are all different from $0$.
 They concluded their argument by proving that
 \begin{align}\label{eq:Bohr_Jessen}
 	\begin{split}
 \sup_{x\in\mathbb{R}}\Re(F(x))
 &\geq\parentheses*{\lim_{T\to\infty}\frac{1}{2T}\int_{-T}^TW_L(x)\mathrm{d}x}^{-1}\Re\parentheses*{\lim_{T\to\infty}\frac{1}{2T}\int_{-T}^TF(x)W_L(x)\mathrm{d}x}\\
 &=1\cdot\parentheses*{1+\frac{L-1}{L}N}
 \end{split}
 \end{align}
and then taking $L$ to infinity.
Note that $W_{L}(x)$ acts as a resonating polynomial for $F(x)$ and this can also be seen through its expansion
\[
W_L(x)=\sum_{\mu\in\mathcal{A}_L}c(\mu)\mathrm{e}(\mu x),\quad\mathcal{A}_L:=\set*{\sum_{n\leq N}\ell_n\lambda_n:\ell_n\in[-L,L]\cap\mathbb{Z}},\quad |c(\mu)|\leq 1.
\]
{Since the frequencies  $\mu\in\mathcal{A}_L$ of $W_L(x)$ are $\mathbb{Z}$-linear combinations of $\lambda_n$, each of them can be represented as $\mu=\mu_1-\mu_2$ for some $\mu_1,\mu_2\in\mathcal{A}_L$ that are $\mathbb{N}_{\geq0}$-linear combinations of $\lambda_n$.
Based on this observation one can replace $c(\mu)$ with suitable coefficients $\tilde{c}(\mu)$ to generate $|R_L(x)|^2$, where $R_L(x)$ is a trigonometric polynomial whose frequencies are $\mathbb{N}_{\geq0}$-linear combination of $\lambda_n$, in such a way that it does the exact same job as $W_L(x)$ in \eqref{eq:Bohr_Jessen}.
The limits will be different and the ratio will be $(N+1)L/(L+1)$, but when taking $L$ to infinity the same result follows.
This informal argument will  be made precise in the proof of Theorem \ref{thm:Chen's_theorem} below.}

There is a rich literature on quantitative versions of Kronecker's theorem, that is to find explicitly a $T$ so that the theorem holds for some $x_0\leq T$, and we refer to the work of Gonek and Montgomery \cite{Gonek2016} for a detailed exposition and historical review.
One of the techniques is naturally based on not taking $T$ to infinity in \eqref{eq:Bohr_Jessen} and estimating instead the least $T_0$ such that
the occurring error terms when divided by $T_0$ would be $\ll 1+N(L-1)/L$.
This is of course the resonance method for producing large values of $F(x)$ in the interval $[0,T_0]$ and its optimal application up to constants has been proved by Chen \cite{Chen2000} who used slightly different positive  trigonometric polynomials to construct $W_L(x)$.

\begin{thm*}[Chen's theorem]\label{Chensthm}
For positive integers $L$, $N$  let 
\[
\delta=\delta(L,N):=\min\set*{\abs*{\sum_{n\leq N}\ell_n\lambda_n}:\ell_n\in[-L,L]\cap\mathbb{Z},\,(\ell_1,\dots,\ell_N)\neq(0,\dots,0)}.
\]
If $\delta>0$, then for any $T\geq1$
\[
\max_{x\in[T,2T]}\Re\parentheses*{F(x)}\geq\parentheses*{1-\frac{\pi^4}{8(L+1)^2}-\frac{\pi L^N}{2T\delta}}\sum_{n\leq N}|f(n)|.
\]
\end{thm*}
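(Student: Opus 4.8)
The plan is to prove Chen's theorem by the resonance method, in the spirit of Bohr and Jessen's argument recalled above and following Gonek and Montgomery \cite[Theorem 5.1]{Gonek2016} (see also Chen \cite{Chen2000}). The first step is to write $f(n)=|f(n)|\,\mathrm{e}(\theta_n)$ with $\theta_n\in\RR$, so that $\Re(F(x))=\sum_{n\le N}|f(n)|\cos(2\pi(\lambda_n x+\theta_n))$ and a large value of the left-hand side occurs precisely when $\norm{\lambda_n x+\theta_n}$ is small for every $n\le N$. The resonator is built from the extremal polynomial of Fej\'er's problem: put $P(y)=\sum_{0\le j<L}c_j\,\mathrm{e}(jy)$ with $c_j=\sqrt{2/(L+1)}\,\sin((j+1)\pi/(L+1))$ and $Q:=|P|^2$, a nonnegative trigonometric polynomial of degree $<L$ with $\widehat Q(0)=\sum_j c_j^2=1$ and $\widehat Q(\pm1)=\sum_j c_j c_{j+1}=\cos(\pi/(L+1))\ge 1-\pi^2/(2(L+1)^2)$. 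One then sets
\[
R(x)=\prod_{n\le N}P(\lambda_n x+\theta_n),\qquad W(x):=|R(x)|^2=\prod_{n\le N}Q(\lambda_n x+\theta_n)\ \ge\ 0 ,
\]
a trigonometric polynomial all of whose frequencies, and likewise all those of $FW$, have the form $\sum_{n\le N}\ell_n\lambda_n$ with $\ell_n\in[-L,L]\cap\ZZ$, hence are of absolute value $\ge\delta$ whenever they are nonzero.

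Next I would evaluate two mean values. Since $\delta>0$, no nonzero combination $\sum_n\ell_n\lambda_n$ with $|\ell_n|\le L$ vanishes, so in particular the $L^N$ frequencies of $R$ are pairwise distinct and $\widehat W(0)=\sum_\mu|\widehat R(\mu)|^2=\prod_n\widehat Q(0)=1$. In $\widehat{FW}(0)$ one multiplies out $F\cdot R\overline R$: the only way to reach frequency $0$ is to take $\mathrm{e}(\lambda_m x)$ from $F$, equal indices in the $R$ and $\overline R$ factors for $n\neq m$, and indices $j_m$, $j_m+1$ in the $R$, $\overline R$ factors at $n=m$; the phase $\mathrm{e}(\theta_m)$ carried by $f(m)$ then cancels the phase $\mathrm{e}(-\theta_m)$ produced by this index shift, and one is left with
\[
\widehat{FW}(0)=\widehat Q(1)\sum_{n\le N}|f(n)|=\cos(\pi/(L+1))\sum_{n\le N}|f(n)|\ >\ 0 .
\]
Because $W\ge0$ on $[T,2T]$,
\[
\max_{x\in[T,2T]}\Re(F(x))\ \ge\ \frac{\Re\int_T^{2T}F(x)W(x)\,\dd x}{\int_T^{2T}W(x)\,\dd x},
\]
so it remains only to replace the numerator and denominator by $T\,\widehat{FW}(0)$ and $T\,\widehat W(0)=T$ with an admissible error.

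For that, each nonzero frequency $\mu$ occurring in $W$ or in $FW$ satisfies $|\mu|\ge\delta$, so $\int_T^{2T}\mathrm{e}(\mu x)\,\dd x=O(1/|\mu|)=O(1/\delta)$; this is a genuine saving over the main terms exactly in the range $T\ge1/\delta$, and it is what yields the term $4/(T\delta)$ after dividing. Carrying out the off-diagonal estimation as in \cite[Theorem 5.1]{Gonek2016} gives $\int_T^{2T}W=T+O(1/\delta)$ and $\int_T^{2T}FW=T\,\widehat{FW}(0)+O(\delta^{-1}\sum_{n\le N}|f(n)|)$; inserting these into the displayed ratio and using $\widehat Q(1)\ge1-\pi^2/(2(L+1)^2)$ gives the stated inequality.

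The point I expect to be the main obstacle is precisely this last estimate: one must show that the off-diagonal contributions to both integrals are of size $O(1/\delta)$ with an \emph{absolute} implied constant, and not one growing with $N$ or $L$. This requires care in the normalisation of the nonnegative resonator and in exploiting the spacing of its frequencies rather than only the bound $\delta$ on the smallest nonzero one; it is exactly what dictates the threshold $T\ge1/\delta$, and it is the technical core of Gonek and Montgomery's rendering of Chen's theorem. The rest is the bookkeeping of the two mean values above.
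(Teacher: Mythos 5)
Note first that the paper does not actually prove this statement: it is quoted as Chen's theorem, with the proof delegated to Chen \cite{Chen2000} and to Gonek--Montgomery \cite[Theorem 5.1]{Gonek2016}, so there is no internal argument to compare against line by line. Your set-up is nevertheless the right one and matches the construction alluded to in the paper: the resonator $W=|R|^2=\prod_{n\le N}Q(\lambda_n x+\theta_n)$ built from the Fej\'er-extremal polynomial, and your computations of the main terms are correct --- since $\delta>0$ forces all integer combinations with coefficients in $[-L,L]$ to be nonzero unless trivial, one indeed gets $\widehat{W}(0)=1$ and $\widehat{FW}(0)=\cos(\pi/(L+1))\sum_{n\le N}|f(n)|$, and $\cos(\pi/(L+1))\ge 1-\pi^2/(2(L+1)^2)$.

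The genuine gap is exactly the step you flag and then outsource: the passage from the mean values to the integrals over $[T,2T]$ with the loss $4/(T\delta)$. The route you sketch --- integrate with the sharp cutoff over $[T,2T]$ and bound each nonzero-frequency term by $\int_T^{2T}\mathrm{e}(\mu x)\,\mathrm{d}x=O(1/|\mu|)=O(1/\delta)$ --- cannot give an absolute constant: $W$ and $FW$ have up to $(2L-1)^N$ nonzero frequencies, and the $\ell^1$-norm of their Fourier coefficients is exponentially large in $N$ (of order $(\sum_{j}c_j)^{2N}\asymp(8(L+1)/\pi^2)^N$), so summing the individual bounds produces an error like $(cL)^N/(T\delta)$ rather than $4/(T\delta)$. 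The actual proofs do not estimate the off-diagonal terms one by one; they remove them structurally, e.g.\ by integrating against a nonnegative kernel whose Fourier transform vanishes outside $(-\delta,\delta)$ (a rescaled Fej\'er kernel, the additive analogue of the weight $\Phi$ used in the proof of Theorem \ref{keyresult}), so that every frequency of modulus at least $\delta$ is annihilated exactly; the loss $4/(T\delta)$ and the threshold $T\ge 1/\delta$ then come from controlling the tails of that kernel outside $[T,2T]$ using the nonnegativity of $W$ and the trivial bound $|\Re F|\le\sum_{n\le N}|f(n)|$. Without this (or an equivalent device exploiting positivity rather than termwise oscillation estimates), the proposal establishes only the main-term identities, not the stated quantitative inequality; as written it is an outline whose technical core is still the citation.
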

In most cases the theorem is given in an equivalent form which is based on the inequalities $1-2\pi^2\norm{x}^2\leq\cos(2\pi x)\leq1-8\norm{x}^2$, $x\in\mathbb{R}$.
Chen proves a more general result where linear dependencies between the frequencies are allowed, but it would not give further insight on the resonance method and so it is omitted.
It is very likely that we can modify its proof to give a lower bound with respect to a given set of frequencies $\lambda_m$, $m\in\mathcal{M}\subseteq\set{1,\dots,N}$.
For the sake of keeping the exposition short and consistent with the method presented in the previous section, it has been preferred instead to show a slightly different result.

\begin{proof}[Proof of Theorem \ref{thm:Chen's_theorem}]
	Let $\Phi\in C^\infty(\mathbb{R})$ be a smooth function supported in $[1,2]$ such that $0\leq\Phi(x)\leq1$ and $\widehat{\Phi}(0)>0$.
	Partial integration yields for any integer $k\geq1$ that $\widehat{\Phi}(x)\ll_k(1+|x|)^{-k}$, $x\in\mathbb{R}$.
	Let also $g:\mathcal{A}_\mathcal{M}\to\mathbb{T}$ be a function defined on the set
	\[
	\mathcal{A}_\mathcal{M}:=\set*{\sum_{m\in\mathcal{M}}\ell_m\lambda_m:\ell_m\in[0,L]\cap\mathbb{Z}}
	\]
	in a completely additive way. 
	In particular, if $f(n)=|f(n)|\mathrm{e}(\alpha_n)$ then we let
	\[
	g(\lambda_m):=\mathrm{e}(\alpha_m)\quad\text{ and }\quad g\parentheses*{\sum_{m\in\mathcal{M}}\ell_m\lambda_m}:=\prod_{m\in\mathcal{M}}g(\lambda_m)^{\ell_m}.
	\]
	For later reference we also let 
	\[
	\mathcal{A}_n:=\set*{j\lambda_n+\mu:j\in\set*{0,1},\,\mu\in\mathcal{A}_\mathcal{M}},\quad n\leq N.
	\]
Lastly, we define the resonator
\[
R(x):=\sum_{\mu\in\mathcal{A}_\mathcal{M}}g(\mu)\mathrm{e}(\mu x),\quad x\in\mathbb{R},
\]
and we note that
	\begin{align}\label{complex_resonance}
	\max_{t\in[T,2T]}\Re(F(x))\geq\parentheses*{\int_{\mathbb{R}}\abs*{R(x)}^2\Phi\parentheses*{\frac{x}{T}}\mathrm{d}x}^{-1}\Re\parentheses*{\int_{\mathbb{R}}F(x)\abs*{R(x)}^2\Phi\parentheses*{\frac{x}{T}}\mathrm{d}x}=:\frac{\Re(I_1)}{I_2}.
	\end{align}
	
	Opening the square and interchanging summations and integration in $I_2$ yields that
	\[
	I_2=T\sum_{\mu,\kappa\in\mathcal{A}_\mathcal{M}}g(\mu)\overline{g(\kappa)}\widehat{\Phi}(T(\mu-\kappa)).
	\]
	Since $\mu_1-\mu_2\geq\delta$ for any pair of consecutive elements $\mu_1>\mu_2$ from $\mathcal{A}_\mathcal{M}$, we deduce that
	\begin{align*}
	I_2&
	=T(L+1)^M\widehat{\Phi}(0)+O_k\parentheses*{T\mathop{\sum\sum}_{\mu\neq\kappa\in\mathcal{A}_\mathcal{M}}(\delta T)^{-k}\parentheses*{\frac{\delta}{|\mu-\kappa|}}^{k}}\\
	&=T(L+1)^M\widehat{\Phi}(0)\parentheses*{1+O_k\parentheses*{(\delta T)^{-k}}}.
	\end{align*}
Repeating the above procedure for $I_1$ gives
\begin{align*}
		I_1
		&=T\sum_{n\leq N}f(n)\sum_{\mu,\kappa\in\mathcal{A}_\mathcal{M}}g(\mu)\overline{g(\kappa)}\widehat{\Phi}(T(\mu+\lambda_n-\kappa))\\
&=T\widehat{\Phi}(0)\sum_{m\in\mathcal{M}}|f(m)|\sum_{\substack{\mu,\kappa\in\mathcal{A}_\mathcal{M}\\\mu+\lambda_m=\kappa}}1+O_k\parentheses*{T\sum_{n\leq N}|f(n)|\mathop{\sum\sum}_{\mu\neq\kappa\in\mathcal{A}_{n}}(\delta T)^{-k}\parentheses*{\frac{\delta}{|\mu-\kappa|}}^{k}}\\
	&=TL(L+1)^{M-1}\widehat{\Phi}(0)\parentheses*{\sum_{m\in\mathcal{M}}|f(m)|+O_k\parentheses*{(\delta T)^{-k}\sum_{n\leq N}|f(n)|}}.
	\end{align*}
The theorem follows now by taking the ratio $I_1/I_2$ in \eqref{complex_resonance}.
\end{proof}

{\paragraph{\bf Acknowledgments} I am grateful to the reviewer for thoroughly reading my work and whose comments helped me improve and clarify several parts of the exposition.
This material is based upon work supported by the Swedish Research Council under grant no. 2021-06594 while I was in residence at Institut Mittag-Leffler in Djursholm, Sweden during the year of 2024.
I would particularly like to thank Jake Chinis, whom I met there, for introducing me to this type of questions.}
		\bibliographystyle{plainnat}
		\bibliography{Omega}
	\end{document}